\documentclass[11pt,reqno]{amsart}

\usepackage[T1]{fontenc}
\usepackage{lmodern}
\usepackage{amssymb}
\usepackage{microtype}
\usepackage[letterpaper,margin=1.1in]{geometry}
\usepackage{enumitem}
\usepackage{cite}
\usepackage[hidelinks]{hyperref}

\hypersetup{
 pdftitle={Local Presentations of Groups: Completion and Smith’s Conjecture},
 pdfsubject={Local-map completion, preservation of presentations, and a sharp uniform obstruction}
}

\newtheorem{theorem}{Theorem}[section]
\newtheorem{lemma}[theorem]{Lemma}
\newtheorem{proposition}[theorem]{Proposition}
\newtheorem{corollary}[theorem]{Corollary}
\theoremstyle{definition}
\newtheorem{definition}[theorem]{Definition}
\newtheorem{remark}[theorem]{Remark}
\numberwithin{equation}{section}

\DeclareMathOperator{\Aff}{Aff}
\DeclareMathOperator{\ord}{ord}
\DeclareMathOperator{\id}{id}

\newcommand{\Qbar}{\overline{\mathbb{Q}}}
\newcommand{\gen}[1]{\langle #1\rangle}
\newcommand{\case}[1]{\par\smallskip\noindent\emph{#1}\enspace}
\setlist[enumerate]{label=\textup{(\roman*)},leftmargin=*,itemsep=2pt}

\title[Local Presentations of Groups]{Local Presentations of Groups: Completion\\and Smith's Conjecture}
\author{Yu Pan}
\address{School of Mathematical Sciences, Eastern Institute of Technology, Ningbo, Zhejiang, 315200, China}
\email{AllisonPan97@outlook.com}
\date{}
\subjclass[2020]{Primary 20F05; Secondary 20E06, 20E07}
\keywords{Local multiplication, group presentations, free products, affine groups}

\begin{document}

\begin{abstract}
    For a symmetric subset of a group containing the identity, its universal local presentation records the multiplication relations visible within the subset. We prove that a group is recovered from every such generating subset if and only if it is trivial or cyclic of order two or three, as conjectured by P.~A.~Smith. The main step is an exact completion theorem: local maps into a target group always extend to generating domains if and only if every target element has a square root. By embedding an arbitrary group in a group with surjective squaring, we deduce that every partial local presentation embeds in a generating one. This gives subgroup inheritance and reduces the classification to finite generation. We also construct obstructions in one fixed countable metabelian target on domains of at most five elements. This bound is sharp even when the target is allowed to vary.
\end{abstract}
\maketitle

\section{Introduction}\label{sec:introduction}

The question studied here is whether a group can be recovered from the multiplication visible in each of its generating subsets. We first give the definitions needed to state it precisely.

Let $G$ be a group. A subset $X\subseteq G$ is \emph{symmetric} if $X^{-1}=X$. Throughout, a \emph{domain} means a symmetric subset containing $1$; it need not be a subgroup. Define
\begin{equation}\label{eq:presentation}
    U(X)=\bigl\langle [x]\ (x\in X)\;\bigm|\;
    [1]=1,\quad [x][y]=[xy]\ \text{whenever }x,y,xy\in X\bigr\rangle.
\end{equation}

The symbols $[x]$ are formal generators, and only the displayed relations are imposed. Sending $[x]$ to $x$ respects these relations and gives a canonical homomorphism
\begin{equation*}
    \pi_X\colon U(X)\longrightarrow\gen{X},\qquad [x]\longmapsto x.
\end{equation*}
It is surjective because its image contains $X$. A nontrivial element of its kernel represents a relation in $G$ which does not follow from the multiplication rules visible in $X$. We write $P(G)$ if $\pi_X\colon U(X)\to G$ is an isomorphism for every domain $X$ generating $G$. Thus, $P(G)$ asks whether every relation in $G$ follows from the local table of any such $X$.

Smith proposed that the only groups simply connected relative to every generating local subgroup are the cyclic groups of orders one, two, and three \cite[p. 438]{smith1950}. Here, a local subgroup is a domain with the inherited partial multiplication. His local identity words give the same presentation as \eqref{eq:presentation}: if $z=xy\in X$, the word $xyz^{-1}$ is a local identity, since both $xy=z$ and $yz^{-1}=x^{-1}$ belong to $X$. Conversely, evaluating a local identity word through its intermediate products decomposes the corresponding relation into the relations in \eqref{eq:presentation} and reduces it to $[1]=1$. The two sets of relators therefore generate the same normal subgroup.

\begin{theorem}\label{thm:classification}
    For an arbitrary group $G$, the following are equivalent:
    \begin{enumerate}
        \item $P(G)$ holds;
        \item $G\cong1$, $C_2$, or $C_3$.
    \end{enumerate}
\end{theorem}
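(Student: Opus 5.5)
The plan is to prove (ii)$\Rightarrow$(i) by direct computation, and (i)$\Rightarrow$(ii) by combining the subgroup inheritance from the completion theorem with a few explicit obstructing domains.

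\emph{(ii)$\Rightarrow$(i).} For $G=1$ the only domain is $\{1\}$, and $U(\{1\})=1$. For $G=C_2=\gen{a}$ the only generating domain is $\{1,a\}$. Its visible relations are $[1]=1$ and $[a][a]=[1]$, so $U=C_2$. For $G=C_3=\gen{a}$, any generating domain contains $a$ and $a^{-1}=a^2$, so it is all of $G$. The full multiplication table then presents $C_3$. In each case $\pi_X$ is an isomorphism.

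\emph{(i)$\Rightarrow$(ii).} I will use the subgroup inheritance stated in the abstract: every partial local presentation embeds in a generating one, so $P(G)$ implies $P(H)$ for every subgroup $H\le G$. It therefore suffices to show that $P$ fails for a small list of groups, one of which embeds in every $G\not\cong1,C_2,C_3$.

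\case{Infinite cyclic.} Take $X=\{0,\pm2,\pm3\}\subseteq\mathbb Z$, which generates $\mathbb Z$. The only sums of two elements of $X$ that lie in $X$ are the trivial ones ($x+0$ and $x+(-x)$). Hence $U(X)$ is free on $[2],[3]$, and $\pi_X$ is not injective. So $P$ fails for any $G$ with an element of infinite order.

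\case{Finite cyclic of order $n\ge4$.} Take $X=\{1,a,a^{-1}\}$. Since $a^2\notin X$ and $a^{-2}\notin X$, the only relations are the trivial ones, so $U(X)\cong\mathbb Z$, while $\gen{a}=C_n$.

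\case{Remaining torsion groups.} Now assume every element of $G$ has order $1$, $2$ or $3$ and $G\not\cong1,C_2,C_3$. Then there are elements $a,b$ of orders $m,n\in\{2,3\}$ with $\gen a\ne\gen b$. Let $H=\gen{a,b}$ and $X=\{1,a^{\pm1},b^{\pm1}\}$. I will check which products of two elements of $X$ lie in $X$. Any nontrivial such product would force an equation like $ab\in\{1,a^{\pm1},b^{\pm1}\}$ or $a^{\pm2}=b^{\pm1}$, and each of these puts $b\in\gen a$ or $a\in\gen b$. Since $\gen a$ and $\gen b$ are distinct subgroups of prime order, either inclusion forces them to be equal, a contradiction. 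So the only visible relations are those within $\{1,a^{\pm1}\}$ and within $\{1,b^{\pm1}\}$. For order $3$, note that $a\cdot a=a^{-1}\in X$ is visible. Thus $U(X)\cong C_m*C_n$. This group contains the element $[a][b]$ of infinite order, whereas $H$ is torsion, so $\pi_X$ is not injective. Hence $P(H)$ fails, and $P(G)$ fails by inheritance.

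These cases are exhaustive. If $G$ is not torsion, the first case applies. If $G$ has an element of order $\ge4$, the second applies. Otherwise, the exponent of $G$ divides $6$ and $G$ is not cyclic of order $\le3$. A group whose elements all have order $1$, $2$ or $3$ and which is not itself $1$, $C_2$ or $C_3$ has two distinct nontrivial cyclic subgroups.

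The main obstacle is the inheritance step, which the paper derives from the completion theorem. The case analysis itself only requires care in verifying that no unexpected products fall into $X$.
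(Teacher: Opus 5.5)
Your proof is correct, but your route to (i)$\Rightarrow$(ii) differs from the paper's.

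The paper uses Corollary~\ref{cor:subgroups} only to pass to a finitely generated subgroup with at least four elements. It then applies $P$ to the domain of an irredundant generating set (Lemma~\ref{lem:irredundant}), which forces that subgroup to be a free product of copies of $C_2$, $C_3$ and $\mathbb{Z}$. Finally, working inside this free product, it rules out an infinite cyclic factor using the domain $\{1,s^{\pm2},s^{\pm3}\}$, which is exactly your $\mathbb{Z}$ case. It rules out a second finite factor using the new generator $t=ab$, whose local presentation is $J*\gen{T}$.

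You instead push inheritance all the way down to cyclic and two-generator subgroups, splitting by element orders much as the second proof in Section~\ref{sec:detection} does. In the torsion case you get the contradiction directly from the natural domain $\{1,a^{\pm1},b^{\pm1}\}$: its universal group is $C_m*C_n$, in which $[a][b]$ has infinite order, while $H$ is torsion.

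Your route has three advantages:
\begin{enumerate}
\item It avoids the irredundance lemma and the $t=ab$ construction entirely.
\item It needs no split into abelian and nonabelian cases. Groups such as $S_3$ or $A_4$ fall into your third case automatically.
\item Every domain you use has at most five elements. So your argument also produces, for each $G$ with $|G|\ge4$, a noninjective evaluation on a domain of at most five elements, mirroring Theorem~\ref{thm:detection}. There the detecting target is the fixed group $F$; in yours it depends on the domain.
\end{enumerate}

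What the paper's route gives in return is structural information your argument never produces: a finitely generated group with $P$ must already be the free product read off from any irredundant generating set.
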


The quantifier over all generating domains is essential. For comparison, under the hypotheses of \cite[Proposition 2.1]{soergel2023}, simple connectedness of a Cayley flag complex is characterized by a triangular presentation for a fixed generating set. Here, we work directly with \eqref{eq:presentation}, without assuming finite generation, countability, or freeness of a simplicial action.

Section \ref{sec:completion} proves the local map completion theorem. Section \ref{sec:preservation} uses it to preserve partial presentations and show that if $P(G)$ holds, then $P(H)$ holds for every subgroup $H\le G$. This reduces the classification to finitely generated groups, which we treat in Section \ref{sec:classification}. Section \ref{sec:detection} gives a second proof using small affine obstructions and proves that the bound of five elements is sharp.

We work in ZFC, using Zorn's lemma for maximal extensions. All domain-size bounds count the identity; $1=C_1$ denotes the trivial group.

\begin{remark}
    The classification has two proofs: preservation and subgroup inheritance reduce it to finite generation, while the affine obstructions give a direct route from the completion theorem. The preservation theorem is stronger than retaining a single obstruction, since it embeds the whole partial universal group.
    The use of Zorn's lemma does not show that choice is necessary for the classification, and gives no general algorithm for completion. Natural further questions concern effective constructions, bounds on completed domains, smaller detecting targets, and the relation to Smith's group complexes. These questions are distinct from the sharp bound on initial domains proved here.
\end{remark}

\vskip0.5cm
\noindent \textit{Acknowledgements}.
The author is grateful to Professor Tianyu Yuan for his support, guidance, and helpful suggestions.
The author acknowledges the use of ChatGPT for exploration of proof strategies and assistance with developing and checking arguments.  
The author subsequently revised the exposition and takes full responsibility for the mathematical content and the final text.

\section{An exact completion theorem}\label{sec:completion}

We first consider maps which respect the multiplication visible in a domain. Such maps allow us to test whether the local presentation contains all the relations of the ambient group.

\begin{definition}\label{def:local}
    For a domain $X\subseteq G$ and a group $K$, a map $f\colon X\to K$ is
    \emph{local} if $f(1)=1$ and
    \begin{equation*}
        f(xy)=f(x)f(y)\qquad\text{whenever }x,y,xy\in X.
    \end{equation*}
\end{definition}

A local map preserves inverses, since
$f(x)f(x^{-1})=f(1)=1$. By the universal property of a presentation, local maps $X\to K$ correspond bijectively to homomorphisms $U(X)\to K$: the assignment $[x]\mapsto f(x)$ respects the defining relations, and a homomorphism is determined by its values on these generators. Consequently, if $P(G)$ holds, every local map on a generating domain extends uniquely to a homomorphism on $G$, by composing the induced map with $\pi_X^{-1}$.

Our next aim is to extend a local map to a domain which generates $G$. The extension need only remain local; it need not be defined on all of $G$. We first consider adjoining one inverse pair. 
If $t^2$ already lies in the domain, the image of $t$ must be a square root of the prescribed image of $t^2$, which explains the hypothesis below.

\begin{lemma}\label{lem:enlargement}
    Suppose every element of $K$ has a square root. If $f\colon X\to K$ is local and $t\in G\setminus\gen{X}$, then $f$ extends to a local map on $X\cup\{t,t^{-1}\}$.
\end{lemma}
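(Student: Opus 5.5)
The plan is to put $Y=X\cup\{t,t^{-1}\}$, choose a single element $s\in K$, set $f(t)=s$ and $f(t^{-1})=s^{-1}$, and then list every triple $x,y,xy\in Y$ that involves $t^{\pm1}$. I expect the only genuine constraint on $s$ to be $s^2=f(t^2)$ when $t^2\in X$. That is why the hypothesis asks for square roots.

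The key observation is that $t\notin\gen{X}$ rules out almost all mixed products. Take $x\in X$ and $\epsilon=\pm1$. Then $xt^{\epsilon}\in X$ or $t^{\epsilon}x\in X$ would put $t$ in $\gen{X}$, so this never happens. The product $xt^{\epsilon}$ can land on $t^{\delta}$ only in two ways:
\begin{itemize}
\item $\delta=\epsilon$, which forces $x=1$;
\item $\delta=-\epsilon$, which forces $x=t^{-2\epsilon}\in X$.
\end{itemize}
The same holds for $t^{\epsilon}x$. Products of two elements of $X$ cannot equal $t^{\pm1}$, and products of two elements of $X$ lying in $X$ are already handled because $f$ is local on $X$. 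It remains to consider $t^{\epsilon}t^{\delta}$:
\begin{itemize}
\item $t^{\epsilon}t^{-\epsilon}=1$ is fine because $f(t^{-1})=s^{-1}$;
\item $t^{\epsilon}t^{\epsilon}=t^{2\epsilon}$ can lie in $X$, or it can equal $t^{-\epsilon}$, which happens exactly when $t^3=1$.
\end{itemize}
Note that $t^{2\epsilon}=t^{\epsilon}$ is impossible, since then $t=1\in X$.

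First, well-definedness. If $t=t^{-1}$, I need $s=s^{-1}$. Then $t^2=1\in X$, and I will take $s=1$.

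Now split into three cases.
\begin{enumerate}
\item If $t^2\in X$, pick $s$ with $s^2=f(t^2)$. Then $f(t^{-2})=f(t^2)^{-1}=s^{-2}$ because $f$ preserves inverses. The relations $t^{-2}\cdot t=t^{-1}$, $t\cdot t^{-2}=t^{-1}$, $t^2\cdot t^{-1}=t$, and so on, all reduce to $s^{\pm2}s^{\mp1}=s^{\pm1}$, which holds. If moreover $t^2=1$, then $f(t^2)=1$ and I choose the root $s=1$, so well-definedness is also satisfied.
\item If $t^2\notin X$ but $t^3=1$, the only new relation is $t\cdot t=t^{-1}$ and its inverse. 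This requires $s^3=1$, and $s=1$ works.
\item In every remaining case $s=1$ works, because no relation constrains $s$.
\end{enumerate}

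Finally, $f(1)=1$ is unchanged. The case analysis therefore covers every product $x,y,xy\in Y$, so the extended map is local on $Y$.

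The main obstacle is making sure the enumeration is exhaustive. In particular I must handle the degenerate coincidences $t=t^{-1}$ and $t^2=t^{-1}$ correctly. In those cases an arbitrary square root of $f(t^2)$ would fail, and the choice $s=1$ is forced.
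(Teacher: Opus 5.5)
Your proposal is correct and follows essentially the same route as the paper: send $t^{\pm1}$ to $s^{\pm1}$, where $s$ is a square root of $f(t^2)$ when $t^2\in X$ and $s=1$ otherwise, and then check old--old, old--new and new--new products using $t\notin\langle X\rangle$. One small correction to your closing remark: when $t=t^{-1}$, any square root $s$ of $f(t^2)=f(1)=1$ already satisfies $s=s^{-1}$, and when $t^3=1$, any $s$ with $s^3=1$ works, so $s=1$ is a convenient choice rather than a forced one.
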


\begin{proof}
    Put $H=\gen{X}$ and $Y=X\cup\{t,t^{-1}\}$. If $t^2\in X$, choose $b\in K$ with $b^2=f(t^2)$; otherwise set $b=1$. Define
    \begin{equation*}
        F|_X=f,\qquad F(t^\varepsilon)=b^\varepsilon
        \quad(\varepsilon=\pm1).
    \end{equation*}
    Neither $t$ nor $t^{-1}$ lies in $X$. If they coincide, then $t^2=1\in X$ and $b^2=1$, so the definition is still well-defined. We check all visible products in $Y$.

    \case{Case 1: two old factors.}
    Their product lies in $H$, so cannot be $t^{\pm1}$. If it belongs to $Y$, it therefore belongs to $X$, and locality follows from $f$.

    \case{Case 2: one old and one new factor.}
    For $x\in X$, neither $xt^\varepsilon$ nor $t^\varepsilon x$ can belong to $X$, since either equality would put $t$ in $H$. If the product equals a new element $t^\eta$, then
    \begin{equation*}
        xt^\varepsilon=t^\eta\ \text{or}\ t^\varepsilon x=t^\eta
        \quad\Longrightarrow\quad x=t^{\eta-\varepsilon},
        \qquad\varepsilon,\eta\in\{1,-1\}.
    \end{equation*}
    Thus $x\in\{1,t^2,t^{-2}\}$. If $x=1$, then $f(x)=1$. If $x=t^2$ or $t^{-2}$, symmetry gives $t^2\in X$, and the choice of $b$ together with inverse preservation gives $f(x)=b^2$ or $b^{-2}$, respectively. In each case, $f(x)=b^{\eta-\varepsilon}$. Hence
    \begin{equation*}
        f(x)b^\varepsilon=b^\eta=b^\varepsilon f(x).
    \end{equation*}
    Only powers of $b$ are involved, so no commutativity assumption on $K$ is needed.

    \case{Case 3: two new factors.}
    Opposite signs give inverse cancellation. Equal signs give $t^{2\varepsilon}$: if this lies in $X$, the square equation or its inverse applies. If $t^{2\varepsilon}=t^\eta$ is new, then $\eta=\varepsilon$ would force $t=1$, while $\eta=-\varepsilon$ forces $t^3=1$. In the latter case $t^2=t^{-1}\notin X$, so $b=1$ and the relation is respected. These cases prove that $F$ is local.
\end{proof}

The preceding lemma gives the extension step. We now show that the existence of square roots is both sufficient and necessary for completion to a generating domain.

\begin{theorem}\label{thm:completion}
    For a group $K$, the following are equivalent:
    \begin{enumerate}
        \item For every group $G$, every domain $X_0\subseteq G$, and every local map $f_0\colon X_0\to K$, there are a generating domain $X\supseteq X_0$ and a local map $f\colon X\to K$ extending $f_0$.
        \item The assertion in \textup{(i)} holds whenever $X_0$ is finite.
        \item The function $K\to K$, $k\mapsto k^2$, is surjective.
    \end{enumerate}
\end{theorem}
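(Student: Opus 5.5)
The plan is to prove (iii)$\Rightarrow$(i)$\Rightarrow$(ii)$\Rightarrow$(iii). The middle implication is trivial, since (ii) is a special case of (i).

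\emph{(iii)$\Rightarrow$(i).} I would use Zorn's lemma on the set of pairs $(Y,g)$, where $Y\supseteq X_0$ is a domain in $G$ and $g\colon Y\to K$ is local with $g|_{X_0}=f_0$. Order these pairs by extension. The union of a chain of domains is a domain. The union of the maps is local, because any visible relation $x,y,xy\in Y$ already lives in a single member of the chain. So a maximal pair $(X,f)$ exists. If $\gen{X}\neq G$, pick $t\in G\setminus\gen{X}$. Lemma~\ref{lem:enlargement}, which uses exactly hypothesis (iii), extends $f$ to the strictly larger domain $X\cup\{t,t^{-1}\}$ and contradicts maximality. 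Hence $X$ generates $G$.

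\emph{(ii)$\Rightarrow$(iii).} I would argue by contraposition. Suppose $k\in K$ has no square root. I want a group $G$, a finite domain $X_0$ and a local map $f_0$ with some $f_0(c)=k$, chosen so that every generating domain $X\supseteq X_0$ is forced to contain an element $t$ with both $t,t^2$ visible and $t^2=c$. Then any local extension $f$ satisfies $f(t)^2=f(t^2)=k$, a contradiction.

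When $k^2=1$, the model case is $G=C_4=\gen{a}$, $X_0=\{1,a^2\}$ and $f_0(a^2)=k$; this map is local precisely because $k^2=1$. Every element outside $\gen{a^2}$ is $a^{\pm1}$. So any generating domain contains $a$ and $a^{-1}$, and the visible product $a\cdot a=a^2$ forces $f(a)^2=k$.

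For general $k$, the obstacle is that in $G=\mathbb{Z}$ the domain $\{0,\pm2\}$ is not enough. Indeed $X=\{0,\pm2,\pm3\}$ generates $\mathbb{Z}$, and no square root of $f(2)$ is forced. I therefore expect the main work to lie in choosing $G$ and $X_0$ so that \emph{every} element outside the proper subgroup $\gen{X_0}$ is close enough to $X_0$ that some square-root relation becomes visible.

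My first attempt would be to enlarge $X_0$ inside $\mathbb{Z}$ or $\mathbb{Z}\times C_2$. For example, I would take $X_0=\{0,\pm2,\pm4\}$ with $f_0(\pm2)=k^{\pm1}$ and $f_0(\pm4)=k^{\pm2}$. I would then analyse the smallest odd element $n$ of a generating $X$ via the visible sums $n\pm2$ and $n\pm4$, hoping to reduce to a chain of visible relations ending in $f(t)^2=k$. If this fails, I would instead look for a non-abelian $G$ in which all elements outside an index-two subgroup square into $X_0$, modelled on the $C_4$ example. I would also allow $f_0$ to take values in the cyclic subgroup $\gen{k}$ only, so that locality of $f_0$ is automatic. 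The later sections' affine obstructions on domains of at most five elements suggest that such a small finite configuration exists. Constructing it is the step I expect to be hardest.
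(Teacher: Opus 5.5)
Your proof of (iii)$\Rightarrow$(i) is the paper's argument: Zorn's lemma on local extensions, then Lemma~\ref{lem:enlargement} to contradict maximality. The implication (i)$\Rightarrow$(ii) is trivial, as you say. The gap is in (ii)$\Rightarrow$(iii). Your $C_4$ example handles only elements $k$ with $k^2=1$, and for general $k$ you give no construction.

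Your first attempt in $\mathbb{Z}$ cannot be rescued by a finer analysis of the smallest odd element. Let $X_0\subseteq 2\mathbb{Z}$ be any finite domain containing $2$. Take a sufficiently large odd $N$ and set $X=X_0\cup\{\pm N\}$; this generates $\mathbb{Z}$.
\begin{itemize}
\item A sum $x\pm N$ with $x\in X_0$ is odd, so it lies in $X$ only if it equals $\pm N$.
\item The only other new sums are $0$ and $\pm 2N$, and $\pm 2N\notin X_0$.
\end{itemize}
So the only new visible relations are identity and inverse relations. Hence $f(\pm N)=1$ is a local extension whatever $k$ is; for your $X_0=\{0,\pm2,\pm4\}$, $N=7$ already works. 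What you need instead is a group in which every element outside the proper subgroup $\gen{X_0}$ has its square (or the inverse of its square) in $X_0$. In $\mathbb{Z}$ the complement always contains elements too far from $X_0$ to see any relation.

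The missing idea is to double your $C_4$ model. Take $G=C_8=\gen{g}$, $X_0=\{1,g^2,g^{-2}\}$, and $f_0(g^{\pm2})=k^{\pm1}$.
\begin{itemize}
\item Now $g^2\ne g^{-2}$ and $g^{\pm4}\notin X_0$, so only identity and inverse relations are visible, and $f_0$ is local for every $k$. This removes the constraint $k^2=1$.
\item The even powers form a proper subgroup, so every generating domain contains an odd power $t=g^r$.
\item Then $t^2=g^{2r}\in\{g^2,g^{-2}\}\subseteq X_0$. The visible product $t\cdot t$ gives $f(t)^2=k$ or $f(t)^2=k^{-1}$, and in the second case $(f(t)^{-1})^2=k$.
\end{itemize}
So your contrapositive scheme must allow $t^2=c^{-1}$ as well as $t^2=c$, and no non-abelian group is needed. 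This is exactly the paper's proof of (ii)$\Rightarrow$(iii).

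Separately, restricting the values of $f_0$ to $\gen{k}$ does not make locality automatic: your own $C_4$ example needs $k^2=1$.
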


\begin{proof}
    The implication (i)$\Rightarrow$(ii) is immediate. For (ii)$\Rightarrow$(iii), let $a\in K$ and take $C_8=\gen{g}$ with
    \begin{equation*}
        X_0=\{1,g^2,g^{-2}\},\qquad
        f_0(g^2)=a,\quad f_0(g^{-2})=a^{-1},\quad f_0(1)=1.
    \end{equation*}
    This map is local: the two nonidentity squares equal $g^4\notin X_0$, so only identity and inverse relations are visible. By (ii), $f_0$ extends to a local map $f$ on a generating domain $X$. Such an $X$ contains an odd power $t=g^r$, since the even powers form a proper subgroup. Then $t^2$ is $g^2$ or $g^{-2}$, giving $f(t)^2=a$ or $f(t)^2=a^{-1}$. In the second case $(f(t)^{-1})^2=a$. Thus every $a$ has a square root.

    Now assume (iii), and fix $G,X_0,f_0$. Consider all pairs $(Y,h)$, where $Y$ is a domain containing $X_0$ and $h\colon Y\to K$ is a local extension of $f_0$. 
    Order these pairs by extension: $(Y,h)\le (Z,k)$ if $Y\subseteq Z$ and $k|_Y=h$. These pairs form a set, since their graphs lie in $G\times K$, and the set is nonempty because it contains $(X_0,f_0)$.

    For a nonempty chain, take the union of its domains and maps. Comparability ensures that the maps agree wherever both are defined, so the union is a function. Its domain is symmetric and contains $X_0$. 
    Any triple $u,v,uv$ in the union lies in one chain member: choose a member for each element and take the largest of the three. Locality in that member proves locality of the union. This gives an upper bound; the empty chain has upper bound $(X_0,f_0)$.

    Zorn's lemma therefore gives a maximal local extension $f\colon X\to K$. If $\gen{X}\ne G$, choose $t\notin\gen{X}$. Lemma \ref{lem:enlargement} extends $f$ to the strictly larger domain $X\cup\{t,t^{-1}\}$, a contradiction. Hence $X$ generates $G$, proving (i).
\end{proof}

\begin{remark}\label{rem:squaring}
    For abelian groups, surjective squaring is called \emph{$2$-divisibility}. For finite groups it is equivalent to odd order: if $N=|K|$ is odd, then $(a^{(N+1)/2})^2=a^{N+1}=a$ by Lagrange's theorem. 
    Conversely, a surjection on a finite set is injective. If $|K|$ were even, Cauchy's theorem would give an involution $v\ne1$ with $v^2=1^2$, a contradiction. For nonabelian $K$, squaring need not be a homomorphism; only its surjectivity is used.
\end{remark}

\section{Preservation of partial presentations}\label{sec:preservation}

Enlarging a domain introduces new multiplication relations, which may make a nontrivial element of its original presentation trivial. We now show that every domain $X_0$ has a generating enlargement $X$ for which this does not happen: the natural map $U(X_0)\to U(X)$ is injective. To apply the completion theorem, we first construct a target containing $U(X_0)$ in which every element has a square root.

\begin{lemma}\label{lem:root-hull}
    Every group $K$ embeds in a group $R(K)$ in which every element has a square root.
\end{lemma}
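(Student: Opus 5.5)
The plan is to adjoin square roots one element at a time, using HNN extensions or amalgamated free products, and then take a direct limit. For a single element $a\in K$, form the amalgamated free product
\begin{equation*}
    K_a = K *_{\gen{a}=\gen{s^2}} \gen{s},
\end{equation*}
where $\gen{s}$ is cyclic and $s^2\mapsto a$ identifies $\gen{s^2}$ with $\gen{a}$. Concretely, if $a$ has infinite order, take $\gen{s}\cong\mathbb{Z}$. If $a$ has finite order $n$, take $\gen{s}\cong C_{2n}$; then $\gen{s^2}\cong C_n\cong\gen{a}$. In either case the amalgamation is along isomorphic cyclic subgroups. By the normal form theorem for amalgamated free products, both factors embed, so $K\hookrightarrow K_a$ and $a=s^2$ has a square root in $K_a$.

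Next, adjoin roots for all elements of $K$ simultaneously. Well-order $K$ and perform the one-element construction transfinitely, taking unions (direct limits of injective maps) at limit stages. This yields $K\le K^{(1)}$ in which every element of $K$ is a square. Alternatively, one could amalgamate all the cyclic groups over $K$ at once, as a tree of amalgamations over a common $K$; this is a standard generalization of the normal form theorem. Either route works in ZFC.

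Finally, iterate countably: $K=K^{(0)}\le K^{(1)}\le K^{(2)}\le\cdots$, where each element of $K^{(i)}$ is a square in $K^{(i+1)}$, and set $R(K)=\bigcup_i K^{(i)}$. Any $k\in R(K)$ lies in some $K^{(i)}$ and so has a square root in $K^{(i+1)}\subseteq R(K)$. The embeddings compose, so $K$ embeds in $R(K)$.

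The main obstacle is the injectivity of $K\to K_a$, which rests entirely on the normal form theorem for amalgamated free products. I would cite it rather than reprove it, after checking the one subtle point: the amalgamated subgroups must be genuinely isomorphic via $s^2\mapsto a$. This is exactly why, in the finite-order case, $\gen{s}$ is chosen of order $2n$ rather than arbitrarily. The other steps, namely that direct limits of injective systems are injective and that the transfinite bookkeeping is valid, are routine.
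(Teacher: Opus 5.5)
Your proof is correct, but it takes a different route from the paper's.

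\textbf{Your route.} You adjoin square roots one element at a time through the amalgam $K *_{\gen{a}=\gen{s^2}}\gen{s}$. Choosing $\gen{s}\cong\mathbb{Z}$ or $C_{2n}$ makes $s^2\mapsto a$ a genuine isomorphism of the amalgamated subgroups, and injectivity rests on the normal form theorem for amalgamated free products. To cover all of $K$ you then need either a transfinite induction along a well-ordering of $K$, or the normal form theorem for a tree of groups.

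\textbf{The paper's route.} The paper handles all of $K$ in one explicit step. It takes $W(K)=(K\times K)\rtimes C_2=K\wr C_2$, embeds $K$ diagonally by $\delta_K(k)=(k,k;0)$, and observes that $(k,1;1)^2=(k,k;0)$. Only the final countable direct limit $K_0\le K_1\le\cdots$ is common to both proofs.

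\textbf{What each buys.} The wreath-product version is checked by a single multiplication. It needs no normal form theorem and no well-ordering, so the lemma is explicit and choice-free. Each stage stays finite when $K$ is finite, since $|W(K)|=2|K|^2$.

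Your amalgam construction is heavier but more flexible. It adjoins roots of any prescribed degree to prescribed elements without imposing further relations. It can also preserve torsion-freeness: if you skip $a=1$, which is already a square, every $\gen{s}$ is infinite cyclic when $K$ is torsion-free. The torsion theorem for amalgams then keeps every stage torsion-free. By contrast, $W(K)$ always contains the involution $(1,1;1)$.

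For Theorem~\ref{thm:preservation} only the embedding and surjective squaring are used, so either construction suffices.
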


\begin{proof}
    Form $W(K)=(K\times K)\rtimes C_2$, where the nonidentity element of $C_2$ swaps the coordinates. For $\varepsilon,\eta\in\{0,1\}$, the multiplication is
    \begin{equation*}
        (a,b;\varepsilon)(c,d;\eta)=
        \begin{cases}
            (ac,bd;\varepsilon+\eta),&\varepsilon=0,\\
            (ad,bc;\varepsilon+\eta),&\varepsilon=1,
        \end{cases}
    \end{equation*}
    with the last coordinate taken modulo two. The diagonal map $\delta_K(k)=(k,k;0)$ is an injective homomorphism, and
    \begin{equation}\label{eq:wreath-square}
        (k,1;1)^2=(k,k;0)=\delta_K(k).
    \end{equation}

    Thus every element of the embedded copy of $K$ has a square root in $W(K)$. To give square roots to the new elements as well, we repeat the construction. Set $K_0=K$, $K_{n+1}=W(K_n)$, with embeddings $\delta_{K_n}$, and take their direct limit $R(K)$. Concretely, an element of $R(K)$ is represented by a pair $(k,n)$ with $k\in K_n$; two pairs represent the same element if their images agree at a common later stage. Products and inverses are computed after passing to a common stage. These operations are well-defined because the structure maps are homomorphisms, and the group laws hold because they hold at every stage.

    Each canonical map $i_n\colon K_n\to R(K)$ is injective: equality of two images is equality at a later stage, where the composite structure map is injective. Every element of the limit is some $i_n(k)$, and \eqref{eq:wreath-square} gives
    \begin{equation*}
        \bigl(i_{n+1}(k,1;1)\bigr)^2
        =i_{n+1}(\delta_{K_n}(k))=i_n(k).
    \end{equation*}
    Thus $i_0$ embeds $K$ and every element of $R(K)$ has a square root.
\end{proof}

\begin{theorem}\label{thm:preservation}
    For every group $G$ and every domain $X_0\subseteq G$, there is a generating domain $X\supseteq X_0$ such that the natural homomorphism
    \begin{equation*}
        j\colon U(X_0)\longrightarrow U(X),\qquad [x]\longmapsto[x],
    \end{equation*}
    is injective.
\end{theorem}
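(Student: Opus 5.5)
The plan is to apply Theorem~\ref{thm:completion} with target $K=R(U(X_0))$ from Lemma~\ref{lem:root-hull}. Put $K_0=U(X_0)$ and let $\iota\colon K_0\to R(K_0)$ be the embedding; every element of $R(K_0)$ has a square root, so condition (iii) of Theorem~\ref{thm:completion} holds.

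First I check that the map $f_0\colon X_0\to R(K_0)$, $f_0(x)=\iota([x])$, is local. We have $f_0(1)=\iota([1])=1$, and whenever $x,y,xy\in X_0$ the relation $[x][y]=[xy]$ holds in $U(X_0)$. Applying the homomorphism $\iota$ gives $f_0(xy)=f_0(x)f_0(y)$.

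By (iii)$\Rightarrow$(i) of Theorem~\ref{thm:completion}, there are a generating domain $X\supseteq X_0$ of $G$ and a local map $f\colon X\to R(K_0)$ extending $f_0$. By the correspondence between local maps and homomorphisms noted after Definition~\ref{def:local}, $f$ induces a homomorphism $\bar f\colon U(X)\to R(K_0)$ with $[x]\mapsto f(x)$.

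Next I show $\bar f\circ j=\iota$. The map $j$ is well defined, since every defining relation of $U(X_0)$ is also a defining relation of $U(X)$: if $x,y,xy\in X_0$, then they lie in $X$. For $x\in X_0$,
\[
\bar f(j([x]))=f(x)=f_0(x)=\iota([x]).
\]
Both sides are homomorphisms agreeing on generators, so they are equal. Since $\iota$ is injective, $j$ is injective.

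There is no real obstacle once Theorem~\ref{thm:completion} and Lemma~\ref{lem:root-hull} are available. The only points to check are that $j$ is well defined and that $f_0$ is local, and both follow directly from the presentation~\eqref{eq:presentation}. The essential input is the construction of a square-root-closed target containing $U(X_0)$, which Lemma~\ref{lem:root-hull} already provides.
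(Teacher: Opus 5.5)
Your proposal is correct and matches the paper's proof step for step. You embed $U(X_0)$ in the square-root-closed group $R(U(X_0))$, complete the local map $x\mapsto\iota([x])$ using Theorem~\ref{thm:completion}, and deduce that $j$ is injective from $\bar f\circ j=\iota$; you also spell out the locality check for $f_0$, which the paper only asserts.
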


\begin{proof}
    Put $K=U(X_0)$ and embed it by $\iota\colon K\hookrightarrow R(K)$ using Lemma \ref{lem:root-hull}. The map $x\mapsto\iota([x])$ on $X_0$ is local, so Theorem \ref{thm:completion} extends it to a local map $f\colon X\to R(K)$ on a generating domain of $G$. It induces $\Phi\colon U(X)\to R(K)$ with $\Phi([x])=f(x)$.

    The map $j$ is well-defined because every multiplication triple in $X_0$ is also one in $X$. For $x\in X_0$,
    \begin{equation*}
        (\Phi\circ j)([x])=f(x)=\iota([x]).
    \end{equation*}
    These symbols generate $K$, so $\Phi\circ j=\iota$. If $j(w)=1$, then $\iota(w)=1$, and injectivity of $\iota$ gives $w=1$. Hence $j$ is injective.
\end{proof}

The preservation theorem allows us to transfer the property $P$ from a group to each of its subgroups.

\begin{corollary}\label{cor:subgroups}
    If $P(G)$ holds and $H\le G$, then $P(H)$ holds.
\end{corollary}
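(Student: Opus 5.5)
Let $Y\subseteq H$ be a domain generating $H$; we must show $\pi_Y\colon U(Y)\to H$ is injective. The plan is to apply Theorem \ref{thm:preservation} inside $G$ with $X_0=Y$. This gives a domain $X\supseteq Y$ generating $G$ for which the natural map $j\colon U(Y)\to U(X)$ is injective. By $P(G)$, the map $\pi_X\colon U(X)\to G$ is an isomorphism.

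Next we compare the two canonical maps. Let $\mathrm{inc}\colon H\hookrightarrow G$ be the inclusion. On generators,
\begin{equation*}
    \pi_X(j([y]))=\pi_X([y])=y=\mathrm{inc}(\pi_Y([y]))\qquad(y\in Y).
\end{equation*}
The symbols $[y]$ generate $U(Y)$, so $\pi_X\circ j=\mathrm{inc}\circ\pi_Y$.

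Now suppose $w\in\ker\pi_Y$. Then $\pi_X(j(w))=1$. Since $\pi_X$ is injective, $j(w)=1$, and since $j$ is injective, $w=1$. Thus $\pi_Y$ is injective. It is surjective onto $\gen{Y}=H$, as noted after \eqref{eq:presentation}, so it is an isomorphism. Hence $P(H)$ holds.

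There is no real obstacle here. The whole difficulty sits in Theorem \ref{thm:preservation}, which supplies the generating enlargement without collapsing $U(Y)$. The only points to check are that $Y$ is a domain of $G$ as well as of $H$, which is clear since symmetry and containing $1$ do not depend on the ambient group, and that the compatibility square commutes, which holds by evaluation on generators.
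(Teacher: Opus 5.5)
Your proposal is correct and follows the paper's proof essentially verbatim: you apply Theorem~\ref{thm:preservation} inside $G$ with $X_0=Y$, use the identity $\pi_X\circ j=\mathrm{inc}\circ\pi_Y$ checked on generators, and conclude injectivity of $\pi_Y$ from that of $\pi_X$ and $j$. Your added remark that $U(Y)$ is the same whether computed in $H$ or in $G$ is a harmless clarification the paper leaves implicit.
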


\begin{proof}
    Let $Y$ be any generating domain of $H$. Applying Theorem \ref{thm:preservation} inside $G$ gives a generating domain $X\supseteq Y$ of $G$ and an injection $j\colon U(Y)\to U(X)$. If $\iota\colon H\hookrightarrow G$ is inclusion, evaluation on the generators gives
    \begin{equation*}
        \pi_X\circ j=\iota\circ\pi_Y.
    \end{equation*}
    By $P(G)$, $\pi_X$ is an isomorphism, so $\pi_X\circ j$ is injective. The identity above then implies that $\pi_Y$ is injective. It is also surjective because $Y$ generates $H$. Thus $P(H)$ holds.
\end{proof}

\begin{remark}\label{rem:preservation}
    Theorem \ref{thm:preservation} preserves every nontrivial element of $U(X_0)$ simultaneously. It asserts the existence of a suitable enlargement, not that every enlargement has this property or that the resulting generating domain is finite.
\end{remark}

\section{Proof of the classification}\label{sec:classification}

By subgroup inheritance, it will be enough to classify the finitely generated groups satisfying $P$: any group with at least four elements contains a finitely generated subgroup with at least four elements. We therefore begin with a finite generating set. A finite generating set is \emph{irredundant} if none of its elements belongs to the subgroup generated by the others. Deleting redundant elements from a finite generating set produces such a set without changing the group generated.

\begin{lemma}\label{lem:irredundant}
    Let $S=\{s_1,\dots,s_d\}$ be an irredundant finite generating set of $G$, and put $X=\{1\}\cup S\cup S^{-1}$. Then
    \begin{equation*}
        U(X)\cong B_1*\cdots*B_d,\qquad
        B_i=\begin{cases}
            C_2,&\ord(s_i)=2,\\
            C_3,&\ord(s_i)=3,\\
            \mathbb{Z},&\text{otherwise}.
        \end{cases}
    \end{equation*}
    The designated generator of $B_i$ corresponds to $[s_i]$. The empty free product is understood to be trivial.
\end{lemma}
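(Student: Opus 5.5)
The plan is to determine exactly which relations \eqref{eq:presentation} imposes for $X=\{1\}\cup S\cup S^{-1}$. Then I will show they are precisely the defining relations of the free product. The key is to list all triples $x,y,xy\in X$. If one factor is $1$, the relation $[1][y]=[y]$ follows from $[1]=1$. If $y=x^{-1}$, we get $[x][x^{-1}]=1$, so $[s_i^{-1}]=[s_i]^{-1}$. This reduces $U(X)$ to generators $[s_1],\dots,[s_d]$.

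Next I will show that any remaining triple involves a single index $i$. Suppose $x=s_i^{\pm1}$ and $y=s_j^{\pm1}$ with $i\ne j$, and $xy\in X$. If $xy=1$, then $s_i\in\gen{s_j}$. If $xy=s_k^{\pm1}$, then for $k\ne i$ we get $s_i\in\gen{s_j,s_k}$ with $i\notin\{j,k\}$, contradicting irredundancy; for $k=i$ we get $s_j\in\gen{s_i}$, again a contradiction. One caveat is that irredundancy also forces the $s_i^{\pm1}$ to be pairwise distinct and nontrivial for distinct $i$. Indeed, $s_i=s_j^{\pm1}$ would make $s_i$ redundant, and $s_i\neq 1$ since $1$ would be redundant unless $d=1$ and $G$ is trivial, an edge case I will treat separately.

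So all remaining relations involve only $\{1,s_i,s_i^{-1}\}$ for a single $i$. I will analyze the single cyclic domain $\{1,s,s^{-1}\}$ with $s\ne1$. A product $s^\varepsilon s^\eta$ with $\varepsilon=\eta$ equals $s^{2\varepsilon}$. This lies in $\{1,s,s^{-1}\}$ iff $s^2=1$ (order 2) or $s^2=s^{-1}$ (order 3); $s^2=s$ is impossible. In the order-2 case we have $s=s^{-1}$, and the visible relation $[s][s]=[1]$ gives $[s]^2=1$. In the order-3 case, $[s][s]=[s^{-1}]=[s]^{-1}$ gives $[s]^3=1$, and the relation $[s^{-1}][s^{-1}]=[s]$ is equivalent to it. Otherwise there is no relation beyond inverses, giving $\mathbb{Z}$.

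Assembling, $U(X)$ has generators $[s_i]$ subject only to $[s_i]^2=1$ or $[s_i]^3=1$ according to order, which is the standard presentation of $B_1*\cdots*B_d$. I will verify this isomorphism by mutually inverse homomorphisms defined on generators; both are well defined by the relation check. I expect the main obstacle to be the bookkeeping in the mixed-index case. This includes the degenerate coincidences where $s_i$ equals $s_j^{-1}$ or where an involution makes $S\cup S^{-1}$ have fewer elements, and the point that irredundancy must exclude every visible product of distinct-index elements landing in $X$.
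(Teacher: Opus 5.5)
Your route is the paper's: eliminate $[1]$ and the inverse symbols, use irredundance to rule out relations that mix indices, and read off $[s_i]^2=1$ or $[s_i]^3=1$ from the one-index relations. As written, however, the mixed-index step misses a case.

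You only treat triples whose two factors $x,y$ have different indices. For $x,y$ with the same index you then only ask whether $s_i^{2\varepsilon}$ lies in $\{1,s_i,s_i^{-1}\}$. But a priori $s_i^{2\varepsilon}$ could equal $s_k^{\pm1}$ for some $k\ne i$. That would make the mixed relation $[s_i]^{2\varepsilon}=[s_k]^{\pm1}$ visible and destroy the free-product splitting. Irredundance does exclude this, since it would give $s_k\in\gen{s_i}$. Until you add this case, though, your conclusion that all remaining relations live in a single $\{1,s_i,s_i^{-1}\}$ does not follow.

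The paper avoids the enumeration altogether. Once the inverse symbols are eliminated, every relator has length at most three in the letters $[s_i]^{\pm1}$. So a relator involving two indices has some index occurring exactly once, and solving for that letter contradicts irredundance. This handles your cases and the missing one uniformly: in $[s_i]^2[s_k]^{-1}$ the index $k$ occurs once.

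Separately, your edge case is misdescribed. The element $1$ is redundant in every generating set, including $S=\{1\}$, because $1\in\gen{\emptyset}$. So irredundance always gives $s_i\ne1$, and the trivial group corresponds to $d=0$. This matters: for $S=\{1\}$ the stated formula would give $B_1=\mathbb{Z}$ while $U(\{1\})=1$. That case must therefore be excluded by irredundance, not treated separately.
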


\begin{proof}
    Irredundance excludes $s_i=1$ and $s_i=s_j^{-1}$ for distinct indices. Put $A_i=[s_i]$. Eliminate $[1]$ and the inverse symbols using $[1]=1$ and $[s_i^{-1}]=A_i^{-1}$; when $s_i=s_i^{-1}$, this imposes $A_i^2=1$. Each remaining defining relator has length at most three in the letters $A_i^{\pm1}$.

    If a relator uses different indices, some index occurs exactly once, since the relator has length at most three. Its evaluation can be written $u s_i^\varepsilon v=1$, where $\varepsilon=\pm1$ and $u,v$ use only the other generators. This gives $s_i^\varepsilon=u^{-1}v^{-1}$, contradicting irredundance. Thus no mixed relator occurs.

    A relator using one index reduces to $A_i^e$ with $|e|\le3$. The case $e=0$ is cancellation, while $|e|=1$ would give $s_i=1$. The cases $|e|=2,3$ are possible only at orders two and three, respectively. Conversely, $s_i^2=1$ is visible at order two, and $s_i^2=s_i^{-1}$ is visible at order three, giving $A_i^2=1$ or $A_i^3=1$. There are no other relators. The presentation therefore splits into the asserted free product.
\end{proof}

Recall that a nonempty reduced word in a free product is nontrivial: its letters are nonidentity elements of the factors, and consecutive letters belong to distinct factors. We use this normal form to find explicit elements in the kernels of local evaluations.

\begin{proposition}\label{prop:finite-generation}
    A finitely generated group satisfying $P$ is isomorphic to $1$, $C_2$, or $C_3$.
\end{proposition}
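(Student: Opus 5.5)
The plan is to combine Lemma \ref{lem:irredundant} with subgroup inheritance (Corollary \ref{cor:subgroups}). Let $G$ be finitely generated with $P(G)$. Choose a finite generating set and delete redundant elements to get an irredundant generating set $S=\{s_1,\dots,s_d\}$. Put $X=\{1\}\cup S\cup S^{-1}$, which is a generating domain. By $P(G)$, $\pi_X$ is an isomorphism, so Lemma \ref{lem:irredundant} gives
\begin{equation*}
    G\cong B_1*\cdots*B_d,\qquad B_i\in\{C_2,C_3,\mathbb{Z}\},
\end{equation*}
with $s_i$ corresponding to the designated generator of $B_i$. If $d=0$, then $G\cong1$. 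The remaining work is to rule out $\mathbb{Z}$ and every free product with $d\ge2$.

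First I show that $P(\mathbb{Z})$ fails. Writing $\mathbb{Z}$ additively, take $S=\{2,3\}$. It generates, and it is irredundant, since $2\notin3\mathbb{Z}$ and $3\notin2\mathbb{Z}$. Both elements have infinite order. Lemma \ref{lem:irredundant} then gives $U(X)\cong\mathbb{Z}*\mathbb{Z}$ for $X=\{0,\pm2,\pm3\}$. This group is nonabelian, since $[2][3][2]^{-1}[3]^{-1}$ is a nonempty reduced word. So $\pi_X$ is not injective, and $P(\mathbb{Z})$ fails. By Corollary \ref{cor:subgroups}, no group satisfying $P$ contains an element of infinite order.

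Now consider the cases with $d\ge1$.
\begin{enumerate}
    \item If $d=1$, then $G\cong B_1$. The case $B_1=\mathbb{Z}$ is excluded by the previous paragraph, so $G\cong C_2$ or $C_3$.
    \item If $d\ge2$, the images $a,b$ of $s_1,s_2$ are nonidentity letters in distinct free factors. For $n\ge1$, the element $(ab)^n$ is a nonempty reduced word $abab\cdots ab$, hence nontrivial by the normal form recalled before the proposition. So $ab$ has infinite order, and $\gen{ab}\cong\mathbb{Z}$ is a subgroup of $G$.
\end{enumerate}
In case (ii), Corollary \ref{cor:subgroups} would give $P(\mathbb{Z})$, a contradiction. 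This includes the case $C_2*C_2$, where $ab$ still has infinite order. The only possibilities left are $1$, $C_2$, and $C_3$.

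There is no serious obstacle once Lemma \ref{lem:irredundant} is available. The points needing care are checking irredundance of $\{2,3\}$ in $\mathbb{Z}$, and confirming that the reduced-word argument applies even when both factors have order two.
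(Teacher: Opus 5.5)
Your proof is correct, but it takes a genuinely different route from the paper's. The paper proves this proposition without using Corollary~\ref{cor:subgroups}. After identifying $G$ with $B_1*\cdots*B_d$, it builds new finite generating domains of $G$ itself. When an infinite cyclic factor occurs, it replaces $s^{\pm1}$ by $s^{\pm2},s^{\pm3}$. When two finite factors occur, it replaces $b^{\pm1}$ by $(ab)^{\pm1}$. It then computes the local presentations as $J*\gen{u}*\gen{v}$ and $J*\gen{T}$, and exhibits the explicit kernel elements $u^3v^{-2}$ and $(A^{-1}T)^n$.

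You instead invoke subgroup inheritance inside the proof, which collapses both cases to the single fact that $P(\mathbb{Z})$ fails. Your witness $\{0,\pm2,\pm3\}$ is exactly the paper's set $E$, but you compute it in $\mathbb{Z}$ alone. This spares you checking that the visible products of $D\cup E$ split between $D$ and $E$. Your second case likewise needs only that $ab$ has infinite order, with no new domain or presentation computation. There is no circularity, since Corollary~\ref{cor:subgroups} is proved in Section~\ref{sec:preservation} independently of Section~\ref{sec:classification}.

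The trade-off is explicitness and self-containment. The paper's argument is elementary: it uses no completion theorem, no root hull $R(K)$, and no Zorn's lemma, and it names a concrete finite generating domain of $G$ together with an explicit kernel element. In your version, the bad generating domain of $G$ exists only abstractly through Theorem~\ref{thm:preservation}. Since the paper's proof of Theorem~\ref{thm:classification} already depends on Corollary~\ref{cor:subgroups}, nothing is lost for the main classification, and your argument is shorter.
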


\begin{proof}
    Choose $S$ and $X$ as in Lemma \ref{lem:irredundant}. By $P(G)$, evaluation identifies $G$ with $B_1*\cdots*B_d$. We exclude the other possibilities by choosing a new generating domain whose local relations omit a relation that holds in $G$.

    \case{Case 1: an infinite cyclic factor is present.}
    Write $G=\gen{s}*J$, where $s$ generates that factor and $J$ is the free product of the others. Let $D$ consist of $1$ and the designated generators and their inverses in the factors in $J$. Replace $s^{\pm1}$ by $s^{\pm2},s^{\pm3}$, giving
    \begin{equation*}
        E=\{1,s^2,s^{-2},s^3,s^{-3}\},\qquad Y=D\cup E.
    \end{equation*}
    This domain still generates $G$, because $s=s^3(s^2)^{-1}$.

    Among exponents $0,\pm2,\pm3$, the only sums remaining in this set come from adding zero or opposite exponents; all other sums are $\pm1,\pm4,\pm5,\pm6$. Since $s$ has infinite order, the only visible relations in $E$ are therefore identity and inverse relations. A product of nonidentity elements from $D$ and $E$ has reduced length two in $J*\gen{s}$, so cannot lie in $Y$. Moreover, $Y\cap J=D$ and $Y\cap\langle s\rangle=E$, so the remaining visible products are precisely those in $D$ and those in $E$. Thus the local presentation splits as
    \begin{equation*}
        U(Y)\cong J*\gen{u}*\gen{v},\qquad u=[s^2],\quad v=[s^3],
    \end{equation*}
    where the last two factors are infinite cyclic; the relations in $D$ present $J$ by Lemma \ref{lem:irredundant}. In $G$, the elements $s^2$ and $s^3$ satisfy $(s^2)^3=(s^3)^2$, but this relation does not follow from the local table. Indeed, the word $u^3v^{-2}$ is reduced and nontrivial, but
    \begin{equation*}
        \pi_Y(u^3v^{-2})=(s^2)^3(s^3)^{-2}=1.
    \end{equation*}
    This contradicts $P(G)$.

    \case{Case 2: at least two factors of orders two or three remain.}
    Choose generators $a,b$ of distinct factors, of orders $m,n\in\{2,3\}$. Write $G=J*\gen{b}$, with $a\in J$, and let $D$ be the domain of designated generators and inverses for $J$, including $1$. Set
    \begin{equation*}
        t=ab,\qquad Y=D\cup\{t,t^{-1}\}.
    \end{equation*}
    Again $Y$ generates, since $b=a^{-1}t$. The words $(ab)^r$ and $(b^{-1}a^{-1})^r$ are reduced of length $2r$ for $r>0$. Hence $t$ has infinite order, and $t^{\pm1},t^{\pm2}\notin J$.

    Two elements of $D$ multiply into $J$, so cannot give $t^{\pm1}$. A mixed product $xt^\varepsilon$ or $t^\varepsilon x$, with $x\in D$, cannot lie in $D$, since that would put $t$ in $J$. If it equals $t^\eta$, then $x=t^{\eta-\varepsilon}$; as $t^{\pm2}\notin J$, only $x=1$ is possible. Finally, new factors with opposite signs cancel, while their equal-sign products $t^{\pm2}$ are neither old elements nor $t^{\pm1}$. Thus
    \begin{equation*}
        U(Y)\cong J*\gen{T},\qquad T=[t],
    \end{equation*}
    where $\gen{T}$ is infinite cyclic. For $A=[a]$, the word $(A^{-1}T)^n$ is reduced of length $2n$ and nontrivial, whereas
    \begin{equation*}
        \pi_Y\bigl((A^{-1}T)^n\bigr)=(a^{-1}ab)^n=b^n=1.
    \end{equation*}
    This is another contradiction. Therefore there are no infinite cyclic factors and at most one finite factor. The possibilities are precisely $1$, $C_2$, and $C_3$.
\end{proof}

\begin{proof}[Proof of Theorem \ref{thm:classification}]
    Suppose $P(G)$ holds. If $G$ contains four distinct elements, the subgroup $H$ they generate is finitely generated and has at least four elements. By Corollary \ref{cor:subgroups}, $P(H)$ holds, contrary to Proposition \ref{prop:finite-generation}. Hence $|G|\le3$, so $G$ is trivial or cyclic of order two or three.

    Conversely, a generating domain in any of these groups is the whole group: in $C_3$, containing one nonidentity element also forces its distinct inverse, and the other cases are immediate. The full multiplication table presents $G$, since $g\mapsto[g]$ is then a homomorphism inverse to evaluation. Thus $P(G)$ holds.
\end{proof}

\section{A fixed target and a sharp five-element bound}\label{sec:detection}

We now give a second proof of the classification by constructing local maps on small domains which cannot extend to homomorphisms on $G$. The completion theorem will extend these maps to generating domains and thereby show that $P(G)$ fails. For every $G$ with $|G|\ge4$, we will use the same target group and show that five elements suffice for the initial domain.

\begin{definition}\label{def:obstruction}
    A \emph{local obstruction with target $K$} is a local map $f_0\colon X_0\to K$, where $X_0\subseteq G$ is a domain, which is not the restriction of any homomorphism $G\to K$.
\end{definition}

\begin{lemma}\label{lem:obstruction-completion}
    Suppose every element of $K$ has a square root. Every local obstruction $f_0\colon X_0\to K$ extends locally to a generating domain $X$ of $G$. For every such extension, $\pi_X\colon U(X)\to G$ has nontrivial kernel.
\end{lemma}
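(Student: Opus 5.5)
The first assertion follows at once from Theorem \ref{thm:completion}. By hypothesis $K$ has surjective squaring, which is condition (iii). Implication (iii)$\Rightarrow$(i) then gives a generating domain $X\supseteq X_0$ and a local map $f\colon X\to K$ extending $f_0$. Nothing about the obstruction property is used at this stage; it enters only in the second assertion.

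For the second assertion, let $f\colon X\to K$ be any local extension of $f_0$ to a generating domain $X$. Local maps on $X$ correspond bijectively to homomorphisms $U(X)\to K$, so $f$ induces $\Phi\colon U(X)\to K$ with $\Phi([x])=f(x)$. I will argue by contradiction and suppose $\ker\pi_X$ is trivial. Since $X$ generates $G$, $\pi_X$ is surjective, hence an isomorphism. This is exactly the situation of the remark after Definition \ref{def:local}: the composite $\varphi=\Phi\circ\pi_X^{-1}\colon G\to K$ is a homomorphism. For $x\in X$ we have $\varphi(x)=\Phi([x])=f(x)$, so $\varphi$ restricts to $f$ on $X$, and therefore to $f_0$ on $X_0$. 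This contradicts the assumption that $f_0$ is a local obstruction, i.e.\ not the restriction of any homomorphism $G\to K$. Hence $\ker\pi_X\neq 1$.

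There is no real obstacle here; the only care needed is that the contradiction is valid for every extension, not just the one produced by Zorn's lemma. It is, because the argument uses only locality of $f$ and the generating property of $X$.
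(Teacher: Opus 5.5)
Your proposal is correct and follows the paper's proof essentially verbatim. Existence comes from (iii)$\Rightarrow$(i) of Theorem~\ref{thm:completion}, and for any local extension, injectivity of $\pi_X$ would make $\Phi\circ\pi_X^{-1}$ a homomorphism $G\to K$ extending $f_0$, contradicting the obstruction property.
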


\begin{proof}
    Existence follows from Theorem \ref{thm:completion}. Any local extension $f$ induces $\Phi\colon U(X)\to K$. If $\pi_X$ had trivial kernel, it would be an isomorphism, and $\Phi\circ\pi_X^{-1}$ would be a homomorphism $G\to K$ extending $f_0$, a contradiction.
\end{proof}

We next choose a target to which the preceding lemma applies. Let $\Qbar$ be the field of algebraic numbers, and consider the affine group
\begin{equation*}
    F=\Aff_1(\Qbar)=\{h_{a,b}\colon z\mapsto az+b:
    a\in\Qbar^{\times},\ b\in\Qbar\}.
\end{equation*}
Its operation is composition, with $AB=A\circ B$. The formulas
\begin{equation*}
    h_{a,b}\circ h_{c,d}=h_{ac,ad+b},\qquad
    h_{a,b}^{-1}=h_{a^{-1},-a^{-1}b}
\end{equation*}
show that it is a group and that the multiplier
$\mu\colon F\to\Qbar^{\times}$, $\mu(h_{a,b})=a$, is a surjective homomorphism.

\begin{lemma}\label{lem:affine-target}
    The group $F$ is countable and metabelian, and every element has a square root.
\end{lemma}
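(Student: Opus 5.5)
The three claims are independent, and I will verify them in order using only the explicit formulas for composition and inversion in $F$ displayed above.

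\emph{Countability.} The field $\Qbar$ is countable. Each algebraic number is a root of one of the countably many nonzero polynomials in $\mathbb{Q}[z]$, and each such polynomial has finitely many roots. The map $h_{a,b}\mapsto(a,b)$ is an injection $F\to\Qbar^{\times}\times\Qbar$ into a countable set, so $F$ is countable.

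\emph{Metabelian.} The multiplier $\mu\colon F\to\Qbar^{\times}$ is a surjective homomorphism onto an abelian group. Its kernel is the translation subgroup $T=\{h_{1,b}\}$. Since $h_{1,b}\circ h_{1,d}=h_{1,b+d}$, the group $T$ is isomorphic to the additive group of $\Qbar$, hence abelian. Because $F/T\cong\Qbar^{\times}$ is abelian, we get $[F,F]\le T$, so $[F,F]$ is abelian and $F$ is metabelian.

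\emph{Square roots.} Fix $h_{a,b}\in F$ and look for $h_{c,d}$ with $h_{c,d}^2=h_{c^2,cd+d}=h_{a,b}$. This asks for
\begin{equation*}
    c^2=a,\qquad (c+1)d=b.
\end{equation*}
Since $\Qbar$ is algebraically closed, $a$ has two square roots $\pm c_0$. Both are nonzero because $a\ne0$, and they are distinct because the characteristic is zero. Hence at least one of them, call it $c$, satisfies $c\ne-1$. Take that $c$ and put $d=b/(c+1)$. For example, if $a=1$ we choose $c=1$ and $d=b/2$.

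The only delicate point is this last choice. One must avoid $c=-1$, where the equation $(c+1)d=b$ has no solution unless $b=0$. It is handled by the freedom to pick the sign of the square root.
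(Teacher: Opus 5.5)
Your proof is correct and follows the paper's argument essentially step for step. You use the same three ingredients: countability via the countably many polynomials, each with finitely many roots; the metabelian property via the multiplier $\mu$, whose kernel is the abelian translation group; and the square root $h_{c,\,b/(c+1)}$, with the sign of $c$ chosen so that $c\ne-1$.
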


\begin{proof}
    The algebraic numbers form a countable set: they are roots of the countably many nonzero integer polynomials, each with finitely many roots. Hence $F$ is countable. The kernel of $\mu$ is the abelian translation group, and the quotient is the abelian group $\Qbar^{\times}$. Thus the commutator subgroup lies in $\ker\mu$ and is abelian. This is precisely the assertion that $F$ is metabelian.

    For $h_{a,b}\in F$, choose $u\in\Qbar$ with $u^2=a$ and $u\ne-1$. This is possible because $a\ne0$ has two distinct square roots; when $a=1$, take $u=1$. Then $q(z)=uz+b/(u+1)$ belongs to $F$, and
    \begin{equation*}
        (q\circ q)(z)=u^2z+\frac{ub+b}{u+1}=az+b.
    \end{equation*}
    Thus every affine map has a square root in $F$.
\end{proof}

\begin{samepage}
    \begin{theorem}\label{thm:detection}
        For a group $G$, the following are equivalent:
        \begin{enumerate}
            \item $|G|\ge4$;
            \item $P(G)$ fails;
            \item There exists a local obstruction $f_0\colon X_0\to F$ with $|X_0|\le5$.
        \end{enumerate}
        Every obstruction in \textup{(iii)} has a local generating completion, and every such completion has noninjective evaluation. The bound five is sharp, even if the detecting target is allowed to depend on $G$.
    \end{theorem}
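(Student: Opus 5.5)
The equivalence of (i) and (ii) is Theorem \ref{thm:classification}. For (iii)$\Rightarrow$(ii) I will use that $F$ has square roots (Lemma \ref{lem:affine-target}) and apply Lemma \ref{lem:obstruction-completion}. That lemma also gives the sentence about completions: every obstruction in (iii) has a local generating completion, and each such completion has noninjective evaluation. The real work is (i)$\Rightarrow$(iii) and the sharpness claim.

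For (i)$\Rightarrow$(iii) I split according to the element orders in $G$. In each case the obstruction is shown by exhibiting a relation of $G$ that the images in $F$ violate.

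\case{Case A: some $g$ has finite order $n\ge4$.}
Take $X_0=\{1,g,g^{-1}\}$. Since $g^2\ne g^{-1}$, only identity and inverse relations are visible. So $f_0(g)=h_{1,1}$, with $f_0(g^{-1})=h_{1,1}^{-1}$, is local. No homomorphism can extend it, because $h_{1,1}$ has infinite order while $g^n=1$.

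\case{Case B: some $g$ has infinite order.}
Copy Case 1 of Proposition \ref{prop:finite-generation} and take $X_0=\{1,g^{\pm2},g^{\pm3}\}$, which has five elements. Its only visible relations are identity and inverse relations. Send $g^2\mapsto h_{1,1}$ and $g^3\mapsto h_{2,0}$, with the inverse images forced. These two images do not commute, whereas $g^2$ and $g^3$ commute in $G$. Hence no homomorphism extends $f_0$.

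\case{Case C: every element has order $1$, $2$ or $3$.}
Since $|G|\ge4$, there are $a,b$ with $\gen{a}\ne\gen{b}$. Take $X_0=\{1,a^{\pm1},b^{\pm1}\}$, which has at most five elements. A product of two elements of $X_0$ landing in $X_0$, other than the visible relations $a^2=a^{-1}$ or $b^2=b^{-1}$ (and their counterparts), would force $b\in\gen{a}$ or $a\in\gen{b}$. So the local condition only requires $f_0(a)$ and $f_0(b)$ to have orders dividing $\ord(a)$ and $\ord(b)$. I choose these images in $F$ so that $f_0(a)f_0(b)$ has order not dividing $1$, $2$ or $3$:
\begin{itemize}
  \item for two involutions, $z\mapsto -z$ and $z\mapsto -z+1$, whose product is a translation;
  \item for two elements of order three, $\omega z$ and $\omega^{-1}z+1$, whose product is again a translation;
  \item for the mixed case, $-z$ and $\omega z$, whose product has multiplier $-\omega$ of order $6$.
\end{itemize}
Since $ab$ has order at most $3$ in $G$, no homomorphism extends $f_0$. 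The delicate part is the precise enumeration of visible products in Case C.

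For sharpness I take $G=C_3\times C_3$, which has $|G|\ge4$. I claim it admits no local obstruction with $|X_0|\le4$ into any target $K$.
\begin{itemize}
  \item A domain in $G$ has odd size, since there are no involutions, so a domain with at most four elements is either $\{1\}$ or $\{1,x,x^{-1}\}$.
  \item In $\{1,x,x^{-1}\}$ the relation $x\cdot x=x^{-1}$ is visible, so any local $f_0$ satisfies $f_0(x)^3=1$.
  \item Choose a complement $C$ with $G=\gen{x}\times C$. Projecting onto $\gen{x}$ and then sending $x\mapsto f_0(x)$ gives a homomorphism $G\to K$ extending $f_0$.
\end{itemize}
Thus no target detects failure of $P(G)$ with four or fewer elements, and five is optimal. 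This group is also covered by Case C with exactly five elements, which is consistent.
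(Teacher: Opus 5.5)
Your proof is correct. The completion sentence and the sharpness argument for $C_3\times C_3$ match the paper's: your projection onto $\gen{x}$ followed by $x\mapsto f_0(x)$ is exactly the paper's retraction $r$ composed with $f_0$. Your route to (i)$\Rightarrow$(iii) is different. The paper first splits off nonabelian groups, sending a nontrivial commutator to a dilation and killing it with the multiplier $\mu$. It also treats finite order $n\ge4$ and infinite order through the multiplier, using $\zeta_{n+1}$ and the incompatible equations $\lambda^2=1$, $\lambda^3=\zeta_3$. In the residual abelian case it must then show that orders two and three cannot coexist, pass to an $\mathbb{F}_p$-vector space, and use independence of $a,b$ to send them to noncommuting elements of order $p$. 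You split purely by element orders, and in the residual case you test the relation $(ab)^{\ord(ab)}=1$ with $\ord(ab)\le3$ instead of $ab=ba$. That relation holds in every group all of whose elements have order at most three, commutative or not. So your Case C covers groups such as $S_3$, $A_4$, or nonabelian groups of exponent three directly, and you avoid the commutator case, the order-six lemma and the independence argument altogether. The paper's organization gives sharper bookkeeping in return. Its construction detects every nonabelian group on at most three elements, whereas yours may spend five, for instance on a nonabelian group of exponent three or on a free group. Also, the obstructions in the paper's first three cases remain obstructions after composing with $\mu$ into the abelian group $\Qbar^{\times}$. Finally, citing Theorem~\ref{thm:classification} for (i)$\Leftrightarrow$(ii) is legitimate, but your chain (i)$\Rightarrow$(iii)$\Rightarrow$(ii) only needs the easy fact that $P$ holds for groups of order at most three. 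Using just that, as the paper does, keeps this section an independent second proof of the classification.
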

\end{samepage}

\begin{proof}
    We first prove (i)$\Rightarrow$(iii). Assume $|G|\ge4$. We construct the obstruction by considering four cases. Write $\zeta_r$ for a primitive $r$-th root of unity in $\Qbar$. All assignments below preserve the identity and inverses.

    First note the relations on $D_c=\{1,c,c^{-1}\}$ for $c\ne1$. At order two the only additional relation is $[c]^2=1$; at order three it is $[c]^3=1$. At any other order, $c^{\pm2}\notin D_c$: an equality $c^2=1,c$, or $c^{-1}$ would force order two, one, or three. Thus only identity and inverse relations are visible in that case.

    \case{Case 1: $G$ is nonabelian.}
    Choose a nonidentity commutator $c=ghg^{-1}h^{-1}$. On $X_0=D_c$, send $c$ to the dilation $z\mapsto\lambda z$, where
    \begin{equation*}
        \lambda=\begin{cases}
            \zeta_3,&\ord(c)=3,\\
            -1,&\text{otherwise}.
        \end{cases}
    \end{equation*}
    This respects the order-three relation, the involution relation when present, and all inverse relations. Hence the map is local on at most three elements. A global extension $\varphi\colon G\to F$ would give a homomorphism $\mu\circ\varphi$ to an abelian group, which must kill every commutator. But its value on $c$ is $\lambda\ne1$, a contradiction.

    \case{Case 2: an element has finite order $n\ge4$.}
    For such an element $s$, take $X_0=D_s$ and send $s$ to the dilation with multiplier $\zeta_{n+1}$. The description of $D_s$ proves locality. A homomorphism extending it would send $s^n=1$ to the identity, forcing $\zeta_{n+1}^{\,n}=1$. This is impossible because $\zeta_{n+1}$ has order $n+1$. Here $|X_0|=3$.

    \case{Case 3: an element has infinite order.}
    For such an $s$, take
    \begin{equation*}
        X_0=\{1,s^2,s^{-2},s^3,s^{-3}\},\qquad
        f_0(s^2)=\id,\quad f_0(s^3)(z)=\zeta_3z.
    \end{equation*}
    Among the exponents $0,\pm2,\pm3$, a sum remains in this set only when one summand is zero or the summands are opposite. Since $s$ has infinite order, the domain therefore has only identity and inverse relations, so $f_0$ is local. If it extended to $\varphi\colon G\to F$, the multiplier $\lambda=\mu(\varphi(s))$ would satisfy
    \begin{equation*}
        \lambda^2=1,\qquad \lambda^3=\zeta_3.
    \end{equation*}
    The first equation gives $\lambda=\pm1$, whose cube cannot be $\zeta_3$. This gives an obstruction on five elements.

    \case{Case 4: the remaining abelian groups.}
    If none of the preceding cases applies, $G$ is abelian and every nonidentity element has order two or three. Both orders cannot occur: if $u,v$ have these orders, then $\gen{u}\cap\gen{v}=\{1\}$. From $(uv)^r=1$ we obtain $u^r=v^{-r}$ in this intersection, so $2\mid r$ and $3\mid r$. Since $(uv)^6=1$, their product has order six, already excluded.

    Thus $G$ has exponent $p\in\{2,3\}$ and is a vector space over $\mathbb{F}_p$, with scalar multiplication given by integer powers. Choose $a\ne1$ and $b\notin\gen{a}$, which is possible because $|\gen{a}|=p<|G|$. These elements are independent. Indeed, if $a^i b^j=1$ and $j\ne0$ in $\mathbb{F}_p$, choose an integer $k$ with $jk\equiv1\pmod{p}$. Raising $b^j=a^{-i}$ to the power $k$ gives $b=a^{-ik}\in\langle a\rangle$, a contradiction. Hence $j=0$ and then $i=0$ in $\mathbb{F}_p$.

    We will map $a$ and $b$ to elements of order $p$ which do not commute. Set $\zeta=\zeta_p$ and define
    \begin{equation*}
        A(z)=\zeta z,\qquad B(z)=\zeta z+(1-\zeta).
    \end{equation*}
    Both have order $p$, since $B=TAT^{-1}$ for the translation $T(z)=z+1$. They do not commute:
    \begin{equation*}
        AB(z)=\zeta^2z+\zeta(1-\zeta),\qquad
        BA(z)=\zeta^2z+(1-\zeta),
    \end{equation*}
    and the difference of the constant terms is $-(\zeta-1)^2\ne0$.

    For $p=2$, use $X_0=\{1,a,b\}$ and assign $a\mapsto A$, $b\mapsto B$. The domain is symmetric, $ab\notin X_0$ by independence, and its only nonidentity relations $a^2=b^2=1$ are respected.

    For $p=3$, use $X_0=\{1,a,a^{-1},b,b^{-1}\}$ with the same assignments and inverse values. The restrictions to $\gen{a}$ and $\gen{b}$ are homomorphisms. No mixed nonidentity product lies in $X_0$: such a product has both coordinates nonzero in the span of $a,b$, while an element of $X_0$ has at least one coordinate zero. Hence both maps are local, on three and five elements respectively. Neither extends to a homomorphism on $G$, since homomorphic images of the commuting elements $a,b$ would commute.

    These cases prove (i)$\Rightarrow$(iii). By Lemmas \ref{lem:affine-target} and \ref{lem:obstruction-completion}, (iii) implies (ii), together with the assertion about every generating completion. If $|G|\le3$, then $G=1,C_2$, or $C_3$ up to isomorphism, and $P(G)$ holds by the direct verification in Theorem \ref{thm:classification}. Thus (ii)$\Rightarrow$(i). In fact every domain in these groups is either $\{1\}$ or $G$, so every local map extends, with any target. This proves the equivalence without using the assertion from Section \ref{sec:classification} that $P(G)$ forces $|G|\le3$.

    It remains to show that the bound five cannot be reduced, even by changing the target. Let $G=C_3\times C_3$ and let $K$ be any group. A domain $X_0$ of size at most four has size one or three, since its nonidentity elements come in distinct inverse pairs. Hence $X_0=\{1\}$ or $X_0=\{1,a,a^{-1}\}=\gen{a}$. It is therefore a subgroup, and every local map $f_0\colon X_0\to K$ is a homomorphism. To extend $f_0$, it suffices to find a homomorphism $r\colon G\to X_0$ which fixes every element of $X_0$, called a retraction. Use the trivial map when $X_0=\{1\}$; otherwise choose $b\notin\gen{a}$ and set
    \begin{equation*}
        r(a^i b^j)=a^i\qquad(i,j\in\mathbb{F}_3).
    \end{equation*}
    The pair $a,b$ is a basis of this two-dimensional space, so the expression is unique and $r$ is a homomorphism fixing $X_0$. Thus $f_0\circ r$ extends $f_0$. No target can detect this group on four or fewer elements, while Case 4 supplies a five-element obstruction. This proves the optimality of the bound.
\end{proof}

\begin{remark}\label{rem:initial-domain}
    The five-element bound concerns the initial domain $X_0$. Its generating completion may be larger or infinite. In particular, the theorem applies to groups which have no finite generating set.
\end{remark}

\bibliographystyle{amsplain}
\begingroup
    \raggedright
    \bibliography{smith}

@inproceedings{smith1950,
  author    = {Smith, P. A.},
  title     = {Some topological notions connected with a set of generators},
  booktitle = {Proceedings of the International Congress of Mathematicians,
               Cambridge, Massachusetts, 1950},
  volume    = {II},
  publisher = {American Mathematical Society},
  year      = {1952},
  pages     = {436--441},
  url       = {https://www.mathunion.org/fileadmin/ICM/Proceedings/ICM1950.2/ICM1950.2.ocr.pdf},
  note      = {\href{https://www.mathunion.org/fileadmin/ICM/Proceedings/ICM1950.2/ICM1950.2.ocr.pdf}{Official proceedings}}
}

@article{soergel2023,
  author  = {Soergel, M.},
  title   = {Systolic complexes and group presentations},
  journal = {Groups Geom. Dyn.},
  volume  = {17},
  year    = {2023},
  pages   = {899--921},
  doi     = {10.4171/GGD/717},
  note    = {\href{https://doi.org/10.4171/GGD/717}{doi:10.4171/GGD/717}}
}
\endgroup
\end{document}